\theoremstyle{definition}
\newtheorem{defi}{Definition}[section]
\newtheorem{rema}[defi]{Remark}
\newtheorem{conj}[defi]{Conjecture}
\theoremstyle{plain}
\newtheorem{prop}[defi]{Proposition}
\newtheorem{lemm}[defi]{Lemma}
\numberwithin{equation}{section}
\newcommand{\mN}{\mathbb{N}}
\newcommand{\mZ}{\mathbb{Z}}
\newcommand{\mP}{\mathbb{P}}
\newcommand{\mdot}{\!\cdot\!}
\newcommand{\ndiv}{\!\nmid\!}
\newcommand{\mdiv}{\!\mid\!}
\newcommand{\copr}{\!\perp\!}
\newcommand{\vl}{\vrule width 2pt}
\newcommand{\lb}{\linebreak}
\DeclareMathOperator{\modu}{mod} 
\tikzstyle{cloud} = [ellipse, draw, minimum height=1cm, minimum width=2cm]
\tikzstyle{block} = [rectangle, draw, minimum height=1cm, minimum width=3cm]
\tikzstyle{decision} = [diamond, draw, aspect=2, align=center, minimum height=1cm, minimum width=3cm, inner sep=-8pt]
\tikzstyle{term} = [rectangle, draw, rounded corners, minimum height=1cm, minimum width=3cm]
\tikzstyle{line} = [draw, -latex']
\definecolor{lightgrey}{rgb}{0.85,0.875,0.85}   
\newcommand{\clg}{\cellcolor{lightgrey}}
\begin{document}
%%%%%%%%%%%%%%%%%%%%%%%%%%%%%%%%%%%%%%%%%%%%%%%%%%%%%%%

%%%%%%%%%%%%%%%%%%%%%%%%%%%%%%%%%%%%%%%%%%%%%%%%%%%%%%%
%%%%%%%%%%%%%%%%%%%%%%%%%%%%%%%%%%%%%%%%%%%%%%%%%%%%%%%
\title{New computational results\lb on a conjecture of Jacobsthal}
\author{Mario Ziller}
\date{}
%%%%%%%%%%%%%%%%%%%%%%%%%%%%%%%%%%%%%%%%%%%%%%%%%%%%%%%
%%%%%%%%%%%%%%%%%%%%%%%%%%%%%%%%%%%%%%%%%%%%%%%%%%%%%%%

\maketitle

%%%%%%%%%%%%%%%%%%%%%%%%%%%%%%%%%%%%%%%%%%%%%%%%%%%%%%%
%%%%%%%%%%%%%%%%%%%%%%%%%%%%%%%%%%%%%%%%%%%%%%%%%%%%%%%
\begin{abstract}
%%%%%%%%%%%%%%%%%%%%%%%%%%%%%%%%%%%%%%%%%%%%%%%%%%%%%%%
%%%%%%%%%%%%%%%%%%%%%%%%%%%%%%%%%%%%%%%%%%%%%%%%%%%%%%%

Jacobsthal's conjecture has been disproved by counterexample a few \lb years ago. We continue to verify this conjecture on a larger scale. For this purpose, we implemented an extension of the Greedy Permutation \lb Algorithm and computed the maximum Jacobsthal function for the \lb product of $k$ primes up to $k=43$.

We have found various new counterexamples. Their pattern seems to \lb imply that the conjecture of Jacobsthal only applies to several small $k$. \lb Our results raise further questions for discussion.

In addition to this paper, we provide exhaustive information about all covered sequences of the appropriate maximum lengths in ancillary files.\\
\end{abstract}

%%%%%%%%%%%%%%%%%%%%%%%%%%%%%%%%%%%%%%%%%%%%%%%%%%%%%%%
%%%%%%%%%%%%%%%%%%%%%%%%%%%%%%%%%%%%%%%%%%%%%%%%%%%%%%%
\section{Introduction}
%%%%%%%%%%%%%%%%%%%%%%%%%%%%%%%%%%%%%%%%%%%%%%%%%%%%%%%
%%%%%%%%%%%%%%%%%%%%%%%%%%%%%%%%%%%%%%%%%%%%%%%%%%%%%%%

%%%%%%%%%%%%%%%%%%%%%%%%%%%%%%%%%%%%%%%%%%%%%%%%%%%%%%%
\subsection*{\ \\Notation}
%%%%%%%%%%%%%%%%%%%%%%%%%%%%%%%%%%%%%%%%%%%%%%%%%%%%%%%

Henceforth, we denote the set of integral numbers by $\mZ$ and the set of natural \lb numbers, i.e. positive integers, by $\mN$. $\mP=\{p_i\mid i\in\mN\}$ is the ordered set of prime numbers with $p_1=2$. As usual, we define the $k^{th}$ primorial number as the product \lb of the first  $k$ primes:
$$p_k\#=\prod_{i=1}^k p_i\ , k\in\mN.$$

The number of prime divisors of a natural number $n\in\mN$ is the number of different primes which divide $n$:
$$d_\mP(n)=|\{p\in\mP \mid p/n\}|.$$\\

%%%%%%%%%%%%%%%%%%%%%%%%%%%%%%%%%%%%%%%%%%%%%%%%%%%%%%%
\subsection*{Jacobsthal's conjecture}
%%%%%%%%%%%%%%%%%%%%%%%%%%%%%%%%%%%%%%%%%%%%%%%%%%%%%%%

The Jacobsthal function $j(n)$, $n\in\mN$ is defined to be the smallest $m\in\mN$, such that every sequence of $m$ consecutive integers contains at least one element coprime to $n$ \cite{Jacobsthal_1960_I, Erdoes_1962}.

\begin{defi} {\itshape Jacobsthal function.}\\
For $n\in\mN$, the Jacobsthal function $j(n)$ is defined as
$$j(n)=\min\ \{m\in\mN\mid\forall\ a\in\mZ\ \exists\ x\in\{1,\dots,m\}:a+x\copr n\}.$$
\end{defi}

The entire Jacobsthal function is therefore determined by its values for products \lb of distinct primes \cite{Jacobsthal_1960_I}. In his subsequent elaborations \cite{Jacobsthal_1960_I, Jacobsthal_1960_II, Jacobsthal_1960_III, Jacobsthal_1961_IV, Jacobsthal_1961_V}, Jacobsthal derived explicit formulae for the calculation of function values for squarefree $n$ containing \lb up to seven distinct prime factors, and bounds on the function for up to ten distinct prime factors.\\

There are two derived functions related to sqarefree numbers. The particular \lb case of primorial numbers is the first step when investigating squarefree numbers. The Jacobsthal function applied to primorial numbers $h(k)$ is therefore defined as \lb the smallest $m\in\mN$, such that every sequence of $m$ consecutive integers contains an integer coprime to the product of the first $k$ primes.

\begin{defi} {\itshape Primorial Jacobsthal function.} \label{primorial_Jacobsthal}\\
For $k\in\mN$, the primorial Jacobsthal function $h(k)$ is defined as
$$h(k)=j(p_k\#).$$
\end{defi}

The more general case considers the product of $k$ arbitrary but different primes. Here, we initially ask for the maximum of the Jacobsthal function applied to any \lb natural number with $k$ prime factors. The reduction to squarefree numbers is \lb sufficient again.

\begin{defi} {\itshape Maximum Jacobsthal function.} \label{max_Jacobsthal}\\
Let $n,k\in\mN$. Then $H(k)$ is the maximum of the Jacobsthal function for products of $k$ different primes.
$$H(k) = \max_{d_\mP(n)=k}\ j(n) = \max_{\substack{\pi_i\in\mP \\ \pi_1<\dots<\pi_k}}\ j\!\left(\prod_{i=1}^k \pi_i\right).$$
\end{defi}

Obviously, $H(k)\ge h(k)$ follows by definition.\\

The computation of both functions is complicated and time-consuming. However, computational results for $h(k)$ are available for several years \cite{Hagedorn_2009, Ziller_Morack_2016}. Jacobsthal himself knew only very few function values. For small $k$, they were calculated without the help of computers. In all these cases, the results for both functions were the same. So, Jacobsthal assumed (\cite{Jacobsthal_1961_IV}, p. 3) that $H(k)=h(k)$ should hold for all $k\in\mN$.

\begin{conj} {\itshape Jacobsthal conjecture.} \label{Jacobsthal_conjecture}\\
For all $k\in\mN$,
$$H(k)=h(k).$$
\end{conj}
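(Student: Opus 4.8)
Since $p_k\#$ is itself a product of $k$ distinct primes, the inequality $H(k)\ge h(k)$ is immediate from Definition~\ref{max_Jacobsthal}, so the plan is to establish the reverse inequality, i.e.\ to prove that $j(n)\le j(p_k\#)$ for every squarefree $n$ with exactly $k$ prime divisors. The restriction to squarefree $n$ is harmless, since $j$ depends only on the set of primes dividing its argument, and that is already the entire content of the conjecture.

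The natural route is a rearrangement argument. Fix a squarefree $n=\pi_1\cdots\pi_k$ with $\pi_1<\dots<\pi_k$ and suppose $\pi_i\neq p_i$ for some $i$; let $i_0$ be the least such index, so that $\{\pi_1,\dots,\pi_{i_0-1}\}=\{p_1,\dots,p_{i_0-1}\}$ while $\pi_{i_0}>p_{i_0}$ and $p_{i_0}\notin\{\pi_1,\dots,\pi_k\}$. I would try to prove a \emph{swapping lemma}: replacing the prime $\pi_{i_0}$ in the product $n$ by the smaller prime $p_{i_0}$ never decreases the Jacobsthal function. Granting this, finitely many such swaps turn any squarefree $n$ with $k$ prime factors into $p_k\#$ while $j$ only increases, which yields $j(n)\le j(p_k\#)$ and hence the conjecture.

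To attack the swapping lemma I would pass to the covering reformulation: $j(n)-1$ is the largest $L$ for which there exist residues $\rho_i\bmod\pi_i$ with $\{0,1,\dots,L-1\}\subseteq\bigcup_{i}(\rho_i+\pi_i\mZ)$. Take such an optimal configuration for $n$, of length $L=j(n)-1$. Since $p_{i_0}<\pi_{i_0}$, a single residue class modulo $p_{i_0}$ meets the window $\{0,\dots,L-1\}$ in at least as many points as the class $\rho_{i_0}+\pi_{i_0}\mZ$ did; the hope is to choose that residue so that its union with the unchanged classes $\rho_j+\pi_j\mZ$, $j\neq i_0$, still covers an interval of length $\ge L$, possibly after a translation or after shrinking the window by a bounded amount that is recovered from the extra density. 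A careful accounting of the positions left uncovered once the $\pi_{i_0}$-class is deleted, combined with a pigeonhole estimate, would be the heart of the argument.

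The hard part — and here I expect the plan to break down — is exactly this swapping lemma. The points that a class modulo $p_{i_0}$ can cover inside a fixed window are rigidly spaced $p_{i_0}$ apart, and while this is a denser progression than the one coming from $\pi_{i_0}$, it need not hit the particular residues that the optimal $\pi_{i_0}$-class was responsible for; forcing it to do so can open unavoidable gaps, because the optimal configuration for $n$ may exploit delicate alignments among the residues $\rho_i$ that have no analogue after a modulus is changed. In short, the local replacement is too crude to be monotone in general, and I expect the swapping lemma, and with it the conjecture, to survive only for small $k$ and to fail once $k$ is large enough for such alignments to be arranged. Locating the threshold where this first happens is precisely what the computation of $H(k)$ in the remainder of the paper is meant to decide.
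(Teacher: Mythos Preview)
Your instinct is exactly right, and in fact there is nothing to compare against: the paper does \emph{not} prove Conjecture~\ref{Jacobsthal_conjecture}. It is stated as a conjecture precisely because it is false. As the paper records immediately after the statement, Hajdu and Saradha showed $H(24)>h(24)$, so no proof can exist; the remainder of the paper computes $H(k)$ up to $k=43$ and finds many further counterexamples (see Table~\ref{tab_new}), confirming your expectation that the equality only survives for small $k$.

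Your diagnosis of \emph{why} the natural swapping argument fails is also on the mark and matches the paper's informal heuristic: a residue class modulo a smaller prime is denser on average, but the positions it covers are rigid, and for large $k$ an optimal covering can exploit alignments among the larger primes that have no analogue after a modulus is replaced. This is exactly the phenomenon the paper's balanced coverings with primes $>p_k$ witness. So your ``proof proposal'' is really a correct non-proof: you identified the obvious monotonicity strategy, pinpointed the step where it breaks, and anticipated that the breakdown is genuine rather than a defect of the method. The only thing to add is that the threshold is now known---the conjecture holds for $k\le 23$ and first fails at $k=24$---and the paper even ventures the opposite conjecture that $H(k)>h(k)$ for all $k\ge 33$.
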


This assumption seems obvious at a first glance because less elements of a sequence are coprime to a smaller prime than to a larger one on average. Nevertheless, \lb Hajdu and Saradha \cite{Hajdu_Saradha_2012} found a counterexample: $H(24)\!>\!h(24)$. Moreover, they demonstrated that the conjecture holds for $k<24$.

The disproof of Jacobsthal's conjecture implies that there might be more cases \lb for which a particular choice of primes could lead to longer sequences without \lb coprimes than in case of only applying the smallest primes. In the following sections, we provide exhaustive computational results for the maximum Jacobsthal function $H(k)$ for all $k\le43$. In fact, there are quite a number of additional counterexamples. The detailed results rather suggest conjectures in the opposite direction.\\

%%%%%%%%%%%%%%%%%%%%%%%%%%%%%%%%%%%%%%%%%%%%%%%%%%%%%%%
%%%%%%%%%%%%%%%%%%%%%%%%%%%%%%%%%%%%%%%%%%%%%%%%%%%%%%%
\section{The computation of $H(k)$}
%%%%%%%%%%%%%%%%%%%%%%%%%%%%%%%%%%%%%%%%%%%%%%%%%%%%%%%
%%%%%%%%%%%%%%%%%%%%%%%%%%%%%%%%%%%%%%%%%%%%%%%%%%%%%%%

\ \\
The function $H(k)$ cannot be calculated by an unaltered utilisation of its definition because there are infinitely many choices of $k$ different primes. After introducing some divisibility properties of sequences of consecutive integers, we follow an idea of Hajdu and Saradha \cite{Hajdu_Saradha_2012} and first omit the sole even prime 2 from the calculation. This has proven sufficient and reduces unnecessary effort.

In a second step, we introduce an upper bound on the primes needed to be \lb considered \cite{Hajdu_Saradha_2012}. This problem reduction makes the computation of $H(k)$ possible. \lb Simultaneously, the validity of Jacobsthal's conjecture follows from it for $k\le19$.

Finally, we extend the recently developed Greedy Permutation Algorithm \cite{Ziller_Morack_2016} \lb and adapt it to the current problem. That approach ensures an efficient computation of $H(k)$ and an exhaustive search for all representative sequences of maximum length.\\

%%%%%%%%%%%%%%%%%%%%%%%%%%%%%%%%%%%%%%%%%%%%%%%%%%%%%%%
\subsection{Sequences and coverings}
%%%%%%%%%%%%%%%%%%%%%%%%%%%%%%%%%%%%%%%%%%%%%%%%%%%%%%%

A series of consecutive integers $\left(a+1,\dots ,a+m\right)$ where $a\in\mZ$ and $m\in\mN$ is shortly denoted by $\langle a\rangle_{m}$. We emphasise that $a$ itself is not member of $\langle a\rangle_{m}$.

A set of primes $\{\pi_i\}_{i=1}^k$ is called covering of a series of consecutive integers $\langle a\rangle_{m}$ \lb if every elements of the series is divisible by one of the given primes. We shortly say $\{\pi_i\}_{i=1}^k$ covers $\langle a\rangle_{m}$.

\begin{defi} {\itshape Covering.}\\
Let $a\in\mZ$, $m,k\in\mN$ and $\pi_i\in\mP,\ i=1,\dots,k$.
$$ \{\pi_i\}_{i=1}^k \text{ \,is a covering of } \langle a\rangle_{m}\ \iff\ \forall\ x\in\{1,\dots,m\}\ \exists\ i\in\{1,\dots,k\}:\pi_i\mdiv(a+x).$$
\end{defi}
\ 

The prime 2 plays a specific role in coverings. We formulate a lemma which makes the separation of prime 2 straightforward in future considerations. Extending a set of odd primes by 2 may more than double the length of the coverable sequence.

\begin{lemm} \label{two}\ \\
Let $m,k\in\mN$, $k\ge2$, and $\pi_i\in\mP,\ i=1,\dots,k$ with $\pi_1=2$.
$$ \exists\ a\in\mZ: \{\pi_i\}_{i=2}^k \text{ \,covers } \langle a\rangle_{m}\ \iff\ \exists\ b\in\mZ: \{\pi_i\}_{i=1}^k \text{ \,covers } \langle b\rangle_{2m+1}. $$
\end{lemm}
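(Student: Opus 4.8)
The plan is to prove the two implications separately, in both cases using that $2$ is a unit modulo every odd prime, so that ``multiplying the index by $2$'' is a reversible relabelling of residues that I can realise on an actual run of integers by the Chinese Remainder Theorem. The even prime will always take care of the even members of the long sequence, and the odd primes $\pi_2,\dots,\pi_k$ will be responsible exactly for the odd members.

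For the direction ``$\Rightarrow$'', assume $\{\pi_i\}_{i=2}^k$ covers $\langle a\rangle_m$. Since $\pi_2,\dots,\pi_k$ are distinct odd primes, I would invoke CRT to choose $b\in\mZ$ that is \emph{odd} and satisfies $b\equiv 2a\pmod{\pi_i}$ for $i=2,\dots,k$ simultaneously (the parity of $b$ is free because $2$ is coprime to each $\pi_i$). Then I claim $\{\pi_i\}_{i=1}^k$ covers $\langle b\rangle_{2m+1}=(b+1,\dots,b+2m+1)$: because $b$ is odd, the members $b+1,b+3,\dots,b+2m+1$ are even and hence divisible by $\pi_1=2$, while the remaining members are $b+2j$ for $j=1,\dots,m$, and for each odd $\pi_i$ one has $b+2j\equiv 2(a+j)\pmod{\pi_i}$; since by hypothesis some $\pi_i$ with $i\ge 2$ divides $a+j$, that same $\pi_i$ divides $2(a+j)$ and hence $b+2j$.

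For ``$\Leftarrow$'', assume $\{\pi_i\}_{i=1}^k$ covers $\langle b\rangle_{2m+1}$. A block of $2m+1$ consecutive integers always contains at least $m$ odd integers, and the odd integers in a block form an arithmetic progression of common difference $2$; so I can pick $m$ of them, say $c, c+2,\dots, c+2(m-1)$. Each is odd, so none is divisible by $\pi_1=2$, and since each lies in the covered block it must be divisible by some $\pi_i$ with $i\ge 2$. Writing $c+2(j-1)=(c-2)+2j$, I would use CRT to pick $a\in\mZ$ with $2a\equiv c-2\pmod{\pi_i}$ for $i=2,\dots,k$ (solvable since $2$ is invertible mod each $\pi_i$); then $2(a+j)\equiv c+2(j-1)\pmod{\pi_i}$, so whenever an odd $\pi_i$ divides $c+2(j-1)$ it divides $a+j$, giving that $\{\pi_i\}_{i=2}^k$ covers $\langle a\rangle_m$.

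I do not expect any serious obstacle here; the only points that need genuine care are (i) forcing the correct parity of $b$ in the forward direction — getting this wrong would leave $m+1$ odd members to cover with an $m$-covering — and (ii) correctly extracting an arithmetic progression of $m$ odd integers from a block of $2m+1$ consecutive ones in the backward direction. Everything else is the routine ``scale the index by $2$ and apply CRT'' dictionary between the short odd-prime covering and the long full covering.
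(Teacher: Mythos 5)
Your proof is correct and follows essentially the same route as the paper: in the forward direction you make the identical CRT choice $b\equiv 2a\ (\modu \pi_i)$, $b$ odd, and in the backward direction your extraction of an arithmetic progression of $m$ odd members of the long block is just a uniform packaging of the paper's two parity cases for $b$. No gaps.
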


\begin{proof}
\ \\
($\Rightarrow$)
Given $\forall\ x\in\{1,\dots,m\}\ \exists\ i\in\{2,\dots,k\}:\pi_i\mdiv(a+x)$.

There exists $b\in\mZ$ with $b\equiv 2\mdot a\ (\modu \pi_i)$, $i=2,\dots,k$, and $b\equiv 1\ (\modu 2)$ due to the Chinese remainder theorem. Then
\begin{equation*} \begin{split}
b+2\mdot x&\equiv 2\mdot a+2\mdot x\ \ \,\,\equiv 2\mdot(a+x)\ (\modu \pi_i)\ \text{ for } \pi_i>2,\ x=1,\dots,m,\ \text{ and}\\
b+2\mdot x-1&\equiv 1+2\mdot x-1\equiv 0\qquad\quad\ \ \ (\modu 2)\ \ \ \text{ for } x=1,\dots,m+1.
\end{split} \end{equation*}
($\Leftarrow$)
Given $\forall\ x\in\{1,\dots,2\mdot m+1\}\ \exists\ i\in\{1,\dots,k\}:\pi_i\mdiv(b+x)$.

(1) Let $b\equiv 1\ (\modu 2)$. There exists $a\in\mZ$ with $2\mdot a\equiv b\ (\modu \pi_i)$, $i=2,\dots,k$. \lb For $x=1,\dots,m$, we get 
$$b+2\mdot x\equiv 2\mdot a+2\mdot x\equiv 2\mdot(a+x)\ (\modu \pi_i).$$

(2) Let $b\equiv 0\ (\modu 2)$. There exists $a\in\mZ$ with $2\mdot a+1\equiv b\ (\modu \pi_i)$, $i=2,\dots,k$. For $x=1,\dots,m$, we get 
$$b+2\mdot x-1\equiv 2\mdot a+2\mdot x\equiv 2\mdot(a+x)\ (\modu \pi_i).$$

In both cases, $a$ satisfies the requirements because $2\ndiv\pi_i$.
\end{proof}

\vspace{1mm}

%%%%%%%%%%%%%%%%%%%%%%%%%%%%%%%%%%%%%%%%%%%%%%%%%%%%%%%
\subsection{Restriction to odd primes} \label{odd}
%%%%%%%%%%%%%%%%%%%%%%%%%%%%%%%%%%%%%%%%%%%%%%%%%%%%%%%

Given $k$ different primes $\pi_i$, $i=1,\dots,k$, the function $H(k)$ by definition turns out \lb to be the smallest $m\in\mN$, such that every sequence of $m$ consecutive integers contains at least one element coprime to any of the $\pi_i$. This means on the other hand that $H(k)$ is the largest $m\in\mN$ for which a sequence of $m-1$ consecutive integers exists, such that each of its elements is divisible by one of the $\pi_i$.

For the determination of $H(k)$, it has proven sufficient to omit the smallest prime 2 from the calculation. Therefore, we define a function $\Omega(k)$ the computation of which reduces unnecessary effort. Subsequently, we will directly associate it with $H(k)$ in \lb a functional relationship.

We define $\Omega(k)$ as the maximum $m\in\mN$ for which a sequence of $m$ consecutive integers and $k-1$ different odd primes $\pi_i$, $i=2,\dots,k$ exist, such that each element \lb of the sequence is divisible by one of the $\pi_i$.

\begin{defi} {\itshape Maximum Jacobsthal function for odd numbers.} \label{Omega}\\
Let $m,k\in\mN$, $k\ge2$, and $\pi_i\in\mP>2,\ i=2,\dots,k$.
$$\Omega(k)=\max\ \{m\in\mN\mid\exists\ a\in\mZ\ \forall\ x\in\{1,\dots,m\}\ \exists\ i\in\{2,\dots,k\}:\pi_i/(a+x)\}.$$
\end{defi}

In other words, $\Omega(k)$ is the maximum $m\in\mN$ such that $a\in\mZ$ and $\pi_i\in\mP>2$, $i=2,\dots,k$ exist where $\{\pi_i\}_{i=2}^k$ covers $\langle a\rangle_{m}$.

\begin{rema}
We emphasise that the arguments $k$ of $H$ and $\Omega$ were harmonised for the case of $\pi_1=2$ is included in $H$. Furthermore, we remind that $H$ is completely determined by considering only squarefree numbers.
\end{rema}
\ 

For $n,k\in\mN$, the definition of $H(k)=\max_{d_\mP(n)=k} j(n)$ splits into two cases: $2\mdiv n$ \lb and $2\ndiv n$. Thus, we receive for the moment
$$H(k)=\max\{\max_{\substack{d_\mP(n)=k\\2\ \mdiv\ n}} j(n),\ \max_{\substack{d_\mP(n)=k\\2\ \ndiv\ n}} j(n)\}.$$
Considering $k$ arbitrary odd primes only,
$$\max_{\substack{d_\mP(n)=k\\2\ \ndiv\ n}} j(n) = \Omega(k+1)+1$$
follows by definition.\\

On the other hand, we can now relate $H$ and $\Omega$ for the case of coverings including the prime 2.

\begin{lemm} \ \\
Let $k\in\mN\ge2$.
$$\max_{\substack{d_\mP(n)=k\\2\ \mdiv\ n}} j(n) = 2\mdot\Omega(k)+2.$$
\end{lemm}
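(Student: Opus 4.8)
The plan is to reduce the statement to Lemma \ref{two} by peeling off the prime~2 exactly as it was built up there. Concretely, fix $k\ge 2$. For the left-hand side, write $n = 2\cdot m$ with $m$ odd and $d_\mP(m) = k-1$. By the characterisation of $j$ recalled in Section~\ref{odd}, $j(n)$ is the largest integer $\ell$ such that some sequence $\langle b\rangle_{\ell-1}$ is covered by the prime divisors of $n$, i.e. by $\{2\}\cup\{\pi_i\}_{i=2}^{k}$ where the $\pi_i$ are the $k-1$ odd primes dividing $m$. So
$$\max_{\substack{d_\mP(n)=k\\ 2\,\mdiv\, n}} j(n) \;=\; 1 + \max\{\,\ell\in\mN \mid \exists\, b\in\mZ,\ \exists\ \text{odd primes } \pi_2<\dots<\pi_k:\ \{2\}\cup\{\pi_i\}_{i=2}^k \text{ covers } \langle b\rangle_{\ell}\,\}.$$

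The core step is then to evaluate that inner maximum. First I would fix the odd primes $\pi_2,\dots,\pi_k$ and apply Lemma~\ref{two} with these as $\pi_2,\dots,\pi_k$ and $\pi_1 = 2$: for any $m\in\mN$, there exists $a$ with $\{\pi_i\}_{i=2}^k$ covering $\langle a\rangle_m$ if and only if there exists $b$ with $\{2\}\cup\{\pi_i\}_{i=2}^k$ covering $\langle b\rangle_{2m+1}$. Hence the longest sequence coverable by $\{2\}\cup\{\pi_i\}_{i=2}^k$ has odd length, namely $2M+1$ where $M$ is the longest sequence coverable by $\{\pi_i\}_{i=2}^k$ alone. (One should note that the map $m\mapsto 2m+1$ is monotone, so the maxima correspond; and that a covering of length $2M+2$ by $\{2\}\cup\{\pi_i\}$ would restrict to a covering of an odd-indexed or even-indexed subsequence of length $M+1$ by the odd primes, contradicting maximality — this is the one place needing a short argument.) Taking the maximum over all choices of the $k-1$ odd primes, and recalling that $\Omega(k)$ is by Definition~\ref{Omega} exactly the maximum such $M$ over all $a$ and all $k-1$ odd primes, we get that the inner maximum equals $2\,\Omega(k)+1$.

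Combining, $\max_{d_\mP(n)=k,\,2\mid n} j(n) = 1 + (2\,\Omega(k)+1) = 2\,\Omega(k)+2$, which is the claim. The main obstacle — really the only non-bookkeeping point — is justifying that the longest $\{2\}\cup\{\pi_i\}$-coverable sequence has length $2M+1$ rather than $2M+2$ (or larger); Lemma~\ref{two} gives the $2M+1$ lower bound directly, and the matching upper bound follows because in any interval $\langle b\rangle_{L}$ with $L\ge 2M+2$, discarding the (at most $\lceil L/2\rceil$) elements divisible by~$2$ leaves the odd elements, which form two arithmetic progressions of difference~$2$ each of length $\ge M+1$ inside intervals of length $\ge M+1$; pulling back one of these by $x\mapsto 2x$ (as in the $(\Leftarrow)$ direction of Lemma~\ref{two}) produces a sequence of length $\ge M+1$ covered by the odd primes, contradicting the definition of $M$. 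With that in hand the rest is immediate from the definitions of $H$, $\Omega$, and the splitting of $j$ recalled just before the lemma.
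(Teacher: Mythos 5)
Your proof is correct and follows essentially the same route as the paper: reduce to Lemma \ref{two} for a fixed set of odd primes, translate between a covered length $m$ and a covered length $2m+1$ once the prime $2$ is adjoined, and then take the maximum over the choice of primes. You are in fact more explicit than the paper's one-line appeal to ``maximality remains retained in both directions'': ruling out an even covered length $L\ge 2\,\Omega(k)+2$ by pulling back the arithmetic progression of odd positions (of length $\ge\Omega(k)+1$) to a consecutive run covered by the odd primes is exactly the point the paper leaves implicit.
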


\begin{proof}
By definition \ref{Omega}, there exist $a\in\mZ$ and $\pi_i\in\mP>2,\ i=2,\dots,k$ such that $\{\pi_i\}_{i=2}^k$ covers $\langle a\rangle_{\Omega(k)}$. According to lemma \ref{two}, there exists $b\in\mZ$ for which $\{\pi_i\}_{i=1}^k$ covers $\langle b\rangle_{2\Omega(k)+1}$ when $\pi_1=2$. Maximality remains retained in both directions.
\end{proof}

For $k\in\mN\ge 2$, we finally conclude
$$H(k) = \max\{ \Omega(k+1)+1 ,\ 2\mdot\Omega(k)+2 \}.$$

For the determination of $H$-values, it is sufficient to focus on the computation of the function values of $\Omega$. We may limit to odd primes.\\

%%%%%%%%%%%%%%%%%%%%%%%%%%%%%%%%%%%%%%%%%%%%%%%%%%%%%%%
\subsection{Balanced coverings}
%%%%%%%%%%%%%%%%%%%%%%%%%%%%%%%%%%%%%%%%%%%%%%%%%%%%%%%

We note that $H(1)=2$  holds because for any prime $p$, $p\mdiv p$, and $p\ndiv(p\pm1)$. In case of $k=2$, we get $\Omega(k)=1$ for the same reason.
$$H(1)=2,\quad\quad \Omega(2)=1.$$

We now turn our attention to the case $k\ge3$ and go on to restrict the choice of primes. First, we demonstrate that we only need to consider coverings $\{\pi_i\}_{i=2}^k$ of a sequence of maximum length where any $\pi_i>p_k$ covers at least two positions in the sequence exclusively. We call such coverings balanced. In analogy with Hajdu and Saradha \cite{Hajdu_Saradha_2012}, we prove that the primes of balanced coverings can be bounded.

\begin{defi} {\itshape Balanced covering.}\\
Let $\{\pi_i\}_{i=2}^k$ be a covering of $\langle a\rangle_{m}$ with $a\in\mZ$, $m,k\in\mN$, $k\ge3$, and $\pi_i\in\mP>2$,\lb $\ i=2,\dots,k$.

$\{\pi_i\}_{i=2}^k$ is called a balanced covering of $\langle a\rangle_{m}$ if $m=\Omega(k)$ and for all $\pi_j>p_k$, $2\le j\le k$, there exist $x,y\in\mN$, $1\le x<y\le m$, with
\begin{equation*} \begin{split}
&\pi_j\mdiv(a+x) \land \pi_j\mdiv(a+y) \quad\text{and}\\
&\pi_i\ndiv(a+y) \land \pi_i\ndiv(a+y),\ i=2,\dots,k,\ i\ne j.
\end{split} \end{equation*}
\end{defi}

\begin{prop} \ \\
Let $a\in\mZ$, $k\in\mN\ge3$, and $\pi_i,{\pi_i}'\in\mP>2,\ i=2,\dots,k$.

For every covering $\{\pi_i\}_{i=2}^k$ of $\langle a\rangle_{\Omega(k)}$, there exist $b\in\mZ$ and a balanced covering $\{{\pi_i}'\}_{i=2}^k$\lb of $\langle b\rangle_{\Omega(k)}$.
\end{prop}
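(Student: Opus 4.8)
The plan is to start from an arbitrary covering $\{\pi_i\}_{i=2}^k$ of $\langle a\rangle_{\Omega(k)}$ and repeatedly replace any prime that fails the balance condition by a suitable new prime, without shrinking the covered interval, until no offending prime remains. Call $\pi_j$ (with $j$ in the range $2\le j\le k$) \emph{bad} if $\pi_j>p_k$ and $\pi_j$ covers at most one position of $\langle a\rangle_{\Omega(k)}$ exclusively, i.e. every position divisible by $\pi_j$ except possibly one is also divisible by some other $\pi_i$. Since $\Omega(k)$ is the maximum achievable length, we must argue that a bad prime can always be swapped out, and that after finitely many swaps all primes are good; the resulting set is then a balanced covering of $\langle b\rangle_{\Omega(k)}$ for the appropriate shift $b$.

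First I would handle the case where $\pi_j$ covers \emph{no} position exclusively: then $\{\pi_i\}_{i=2,\,i\ne j}^k$ already covers $\langle a\rangle_{\Omega(k)}$, and one may adjoin any new odd prime $\pi_j'$ not among the others (its residue class can be fixed by the Chinese remainder theorem, or one may simply let it hit an already-covered position), keeping the same interval; this strictly decreases the count of primes that are $>p_k$, or at worst keeps it the same while removing one bad prime. Second, the main case: $\pi_j$ covers exactly one position, say $a+x_0$, exclusively. Here I would look for a prime $\pi_j'$, as small as possible, that divides $a+x_0$ and is distinct from all the other $\pi_i$ — and show it covers at least two positions of the interval exclusively, or else argue we can extend the interval, contradicting maximality of $\Omega(k)$. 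The idea, following Hajdu and Saradha, is that among the $\lceil \Omega(k)/\pi_j'\rceil$-or-so positions congruent to $a+x_0$ modulo $\pi_j'$, if $\pi_j'\le p_k$ then $\pi_j'$ is automatically ``allowed'' in a balanced covering, and a counting/pigeonhole argument shows a small such prime exists and does the job. The replacement $\{\pi_i'\}$ then covers the same interval, and $b$ is obtained from $a$ by the Chinese remainder theorem to realise all the chosen residues simultaneously.

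The step I expect to be the main obstacle is showing that the swap never forces the covered interval to shrink below $\Omega(k)$ while simultaneously guaranteeing termination: each replacement must make genuine progress in a well-founded measure (for instance, the multiset of primes exceeding $p_k$, ordered so that removing such a prime or decreasing one counts as progress), yet a naive swap could trade one bad prime for another bad prime of different size and loop. I would resolve this by choosing $\pi_j'$ to be the \emph{smallest} prime dividing $a+x_0$ and different from the other $\pi_i$, and prove that by maximality of $\Omega(k)$ this smallest choice is automatically $\le p_k$ (otherwise the density of integers in $\langle a\rangle_{\Omega(k)}$ divisible by none of the $\pi_i,\pi_j'$ with the old $\pi_j$ removed would be positive, letting us lengthen the sequence); once $\pi_j'\le p_k$ it can never be bad, so the number of primes exceeding $p_k$ strictly drops and the process terminates. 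A final short paragraph would note that after termination every prime $>p_k$ covers at least two positions exclusively, which, combined with $m=\Omega(k)$, is exactly the definition of a balanced covering, and that the shift $b$ exists by the Chinese remainder theorem since all the $\pi_i'$ are distinct.
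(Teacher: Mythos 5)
Your overall strategy — iteratively replacing ``bad'' primes ($\pi_j>p_k$ with at most one exclusively covered position) and terminating because the count of primes exceeding $p_k$ strictly drops once the replacement is $\le p_k$ — matches the paper's proof in outline, as does your observation that a prime covering no position exclusively contradicts maximality of $\Omega(k)$ (it could be re-aimed at position $a+\Omega(k)+1$). But your central replacement step has a genuine gap. You insist that the new prime $\pi_j'$ \emph{divide the actual integer} $a+x_0$, and then try to argue by a density/pigeonhole argument that the smallest such prime is automatically $\le p_k$. That claim is false in general: the only constraint on $a+x_0$ is that $\pi_j$ divides it and no other $\pi_i$ does, so $a+x_0$ may well be $\pi_j$ times a product of enormous primes (or a power of $\pi_j$), leaving no admissible small divisor. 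No counting argument on the divisors of one fixed integer can rescue this.

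The correct move decouples the choice of the new prime from the integer $a+x_0$ entirely. A covering is determined by the residue classes $a_i \bmod \pi_i$ (the paper's Proposition on residue classes), so you are free to assign the replacement prime \emph{any} residue and then re-solve for a new shift $b$ by the Chinese remainder theorem. Concretely: since $\pi_j>p_k$, at most $k-2$ of the $k-1$ odd primes $p_2,\dots,p_k$ appear among the $\pi_i$, so some $p_\nu\le p_k$ is unused. Set $\pi_j'=p_\nu$, keep $\pi_i'=\pi_i$ for $i\ne j$, and choose $b$ with $b\equiv -x_0 \pmod{p_\nu}$ and $b\equiv a \pmod{\pi_i}$ for $i\ne j$. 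Then every position other than $x_0$ is still covered by the unchanged primes (same residues, new anchor $b$), position $x_0$ is covered by $p_\nu$, the length $\Omega(k)$ is preserved, and since $p_\nu\le p_k$ it is exempt from the balance condition — so your termination measure works exactly as you intended. With this repair (which is precisely the paper's argument), your proof goes through.
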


\begin{proof}
Let $\{\pi_i\}_{i=2}^k$ be a covering of $\langle a\rangle_{\Omega(k)}$.

Given any $j$, $2\le j\le k$ with $\pi_j>p_k,$ and a position $a+x$, $1\le x\le m$, exclusively covered by $\pi_j$, i.e. $\pi_j\mdiv(a+x)$ and $\pi_i\ndiv(a+x)$ for $i=2,\dots,k$, $i\ne j$. Such an $x\in\mN$ must exist because $\Omega(k)$ represents the maximum length. Otherwise, $\pi_j$ could be chosen \lb to cover position $a+\Omega(k)+1$.

Due to $\pi_j>p_k$, there is a $p_\nu\not\in\{\pi_i\}_{i=2}^k$ with $2\le\nu\le k$ and $p_\nu\le p_k$. If $a+x$ were the only position with the above property then we set $\{{\pi_i}'\}_{i=2}^k=\{\pi_i\}_{i=2}^k$ for $i\ne j$ and ${\pi_j}'=p_\nu$, and choose $b$ such that $b\equiv -x\ (\modu p_\nu)$ and $b\equiv a\ (\modu \pi_i)$ for $i=2,\dots,k$, $i\ne j$. Thus, $\{{\pi_i}'\}_{i=2}^k$ is a covering of $\langle b\rangle_{\Omega(k)}$with ${\pi_j}'<\pi_j$.
 
A non-balanced covering $\{\pi_i\}_{i=2}^k$ of $\langle a\rangle_{\Omega(k)}$ can therefore be reduced to a balanced covering by repeated application of the just described procedure.
\end{proof}

With that result, we can derive an upper bound on the primes necessary to consider for the computation of $\Omega(k)$.

\begin{prop} \label{bounded _primes}\ \\
Let $\{\pi_i\}_{i=2}^k$ be a balanced covering of $\langle a\rangle_{\Omega(k)}$ with $a\in\mZ$, $k\in\mN\ge3$, and $\pi_i\in\mP>2$,\lb $\ i=2,\dots,k$. Then $\pi_i\le q_k$ holds for all $i=2,\dots,k$ where
$$ q_k=\max\{p\in\mP \ |\ p\le  p_k \lor p\le \Omega(k-1)+1  \}.$$ 
\end{prop}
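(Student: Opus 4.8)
The plan is to bound the primes in a balanced covering by a counting argument: a prime $\pi_i$ larger than $p_k$ must, by the balanced condition, cover at least two positions of $\langle a\rangle_{\Omega(k)}$ exclusively, and removing $\pi_i$ from the covering should leave a shorter coverable sequence, which limits how large $\pi_i$ can be.

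First I would fix a balanced covering $\{\pi_i\}_{i=2}^k$ of $\langle a\rangle_{\Omega(k)}$ and suppose toward a contradiction that some $\pi_j > q_k$; in particular $\pi_j > p_k$, so the balanced condition applies and $\pi_j$ exclusively covers at least two positions, say $a+x$ and $a+y$ with $1 \le x < y \le \Omega(k)$. Since $\pi_j \mid (a+y) - (a+x) = y - x$ and $0 < y - x < \Omega(k)$, the two exclusively covered positions are at distance at least $\pi_j$ apart; more importantly, \emph{all} positions exclusively covered by $\pi_j$ lie in a single residue class mod $\pi_j$, so if $\pi_j > \Omega(k)$ there could be at most one such position in the window — contradicting balancedness and already forcing $\pi_j \le \Omega(k)$. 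The real work is to sharpen $\Omega(k)$ down to $\Omega(k-1)+1$.

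Next I would delete $\pi_j$ from the covering and analyze what remains. The positions \emph{not} exclusively covered by $\pi_j$ are covered by $\{\pi_i\}_{i=2,\,i\ne j}^k$, a set of $k-2$ odd primes. Between two consecutive positions exclusively covered by $\pi_j$ (which are $\pi_j$ apart), there is a block of $\pi_j - 1$ consecutive integers entirely covered by these $k-2$ primes; hence that block has length at most $\Omega(k-1)$, giving $\pi_j - 1 \le \Omega(k-1)$, i.e. $\pi_j \le \Omega(k-1)+1$. Combined with the trivial bound $\pi_j \le p_k$ available when $\pi_j$ is among the first $k$ primes, this yields $\pi_j \le q_k$, where $q_k$ is the largest prime not exceeding $\max\{p_k,\ \Omega(k-1)+1\}$. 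One has to be slightly careful about edge cases: the two exclusively covered positions could be the extreme positions $a+1$ and $a+\Omega(k)$ with nothing exclusively covered by $\pi_j$ strictly between them, but then one instead argues that the block of positions $2,\dots,\Omega(k)-1$ together with a shifted copy (using the freedom to translate $a$ via CRT as in the previous proposition) still produces a sequence of length $\pi_j - 1$ coverable by $k-2$ primes; alternatively, one can extend the window by adjoining an extra position forced by $\pi_j$ beyond the two exclusive ones.

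The main obstacle I anticipate is making the "block between two consecutive exclusive hits is coverable by $k-2$ primes" step fully rigorous. I would need to verify that when $\pi_j$ exclusively covers positions $p$ and $p'$ with $p < p'$ and no exclusive hit strictly between, the positions $p+1, \dots, p'-1$ really are all covered by the other primes (true, since non-exclusive positions covered by $\pi_j$ are covered by someone else, and positions not covered by $\pi_j$ at all are covered by someone else), and that a window of exactly $\Omega(k-1)$ consecutive integers being coverable by $k-2$ odd primes follows from the definition of $\Omega(k-1)$ applied to those $k-2$ primes — using the monotonicity $\Omega(k-1) \ge \Omega$ of any smaller family. Once this block-length inequality $\pi_j - 1 \le \Omega(k-1)$ is secured, the rest is bookkeeping about the definition of $q_k$, and the proposition follows.
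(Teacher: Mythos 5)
Your proposal is correct and follows essentially the same route as the paper: balancedness forces a prime $\pi_j>p_k$ to hit at least two positions of the window, the gap between two such hits yields a block of length at least $\pi_j-1$ covered by the remaining $k-2$ odd primes, whence $\pi_j-1\le\Omega(k-1)$ and thus $\pi_j\le q_k$. The only cosmetic difference is that the paper takes the $\pi_j-1$ positions immediately following one hit (none divisible by $\pi_j$) rather than the block between consecutive exclusive hits, which also renders your worried-about edge case moot — the block between any two consecutive exclusive hits already has length $d\pi_j-1\ge\pi_j-1$ and lies inside the window.
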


\begin{proof}
Let $\{\pi_i\}_{i=2}^k$ be a balanced covering of $\langle a\rangle_{\Omega(k)}$. Without loss of generality, \lb we assume $\pi_2\le\dots\le\pi_k$. Then $\pi_k\ge p_k$ is obvious.

For $\pi_k>q_k$, there could only be one position $x$, $1\le x\le m$ with $\pi_k\mdiv(a+x)$. \lb
We assume there were two positions $x,x+d\mdot\pi_k$ with $d\in\mN$, $1\le x<x+d\mdot\pi_k\le m$, \lb $\pi_k\mdiv(a+x)$, and consequently $\pi_k\mdiv(a+x+d\mdot\pi_k)$. Then $\{\pi_i\}_{i=2}^{k-1}$ must cover \lb $\langle a+x\rangle_{\pi_k-1}=(a\!\!\;+\!\!\;x\!\!\;+\!\!\;1,\dots,a\!\!\;+\!\!\;x\!\!\;+\!\!\;\pi_k\!\!\;-\!\!\;1)$. Here, we conclude $\pi_k-1 \le \Omega(k-1)$ \lb in contradiction with $\pi_k>q_k\ge \Omega(k-1)+1$.

Thus, the case $\pi_k>q_k$ violates the definition of balanced coverings.
\end{proof}

The previous proposition proves the restriction to balanced coverings sufficient. \lb For the computation of $\Omega(k)$, only odd primes $\pi_i\le q_k$ need to be included in the calculations. The scheme in figure \ref{scheme} depicts all necessary steps for the calculation \lb of $H(k)$. As demonstrated above, $\Omega(k)$ must always be known one step ahead.\\

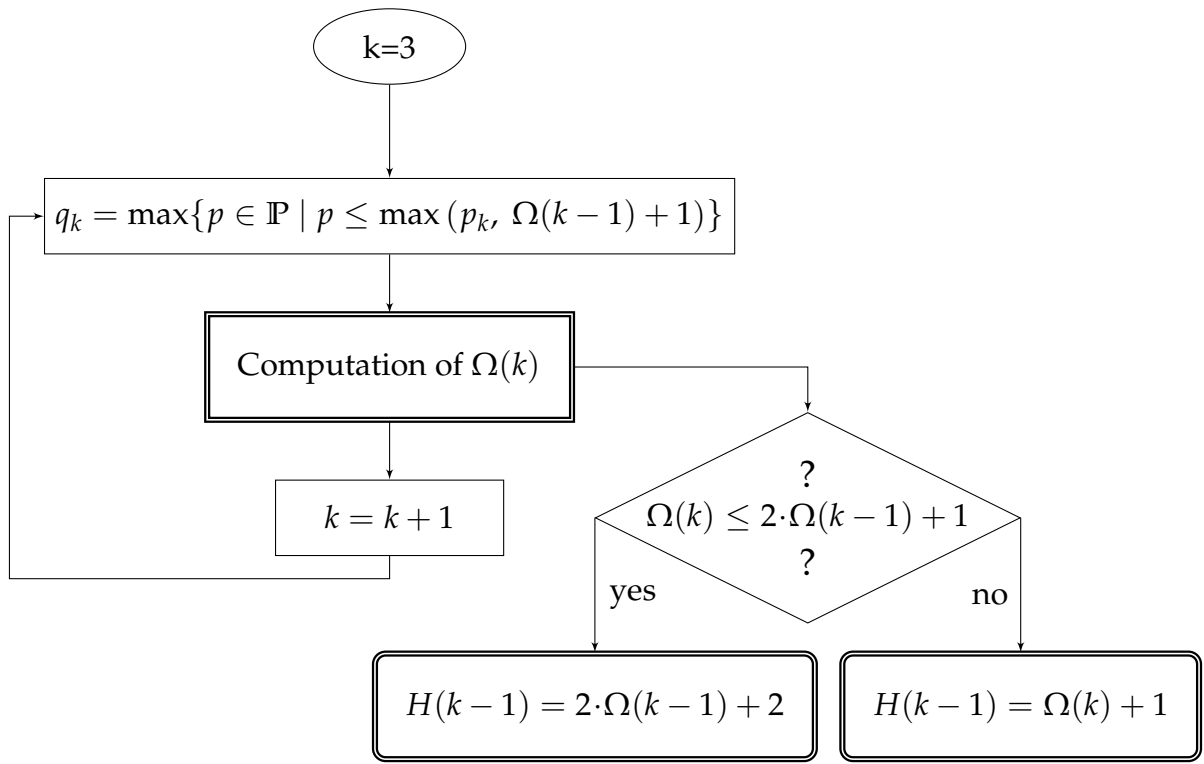
\begin{figure}[!h]
   \centering
\begin{tikzpicture}	% Scheme
    \node [cloud] (init) {k=3};
    \node [block, below of=init, node distance=2.25cm] (q) {$q_k=\max\{p\in\mP \ |\ p\le \max \left( p_k ,\ \Omega(k-1)+1\right) \}$};
    \path [line] (init) -- (q);
    \node [block, below of=q, line width=0.3mm, double, minimum height=1.4cm, inner sep=4mm, node distance=2cm] (omega) {Computation of $\Omega(k)$};
    \path [line] (q) -- (omega);
    \node [block, below of=omega, node distance=2cm] (k) {$k=k+1$};
    \path [line] (omega) -- (k);
    \node [decision, right of=k, node distance=5.5cm] (decide) {\large{?}\\$\Omega(k)\le 2\mdot\Omega(k-1)+1$\vspace{0.3mm}\\\large{?}};
    \path [line] (omega.east) -| (decide.north);
    \node [term, left of=decide, below of=decide, line width=0.3mm, double, minimum height=1.4cm, inner sep=4mm, node distance=2.5cm, xshift=-3mm] (H1) {$H(k-1)=2\mdot\Omega(k-1)+2$};
    \path [line] (k.south) -- ++(0mm,-3mm)  -- ++(-5cm,0) |- (q.west);
    \path [line] (decide.west) -| node [xshift=5mm, yshift=-10.5mm] {yes} (H1);
    \node [term, right of=decide,  below of=decide, line width=0.3mm, double, minimum height=1.4cm, inner sep=4mm, node distance=2.5cm, xshift=3mm] (H2) {$H(k-1)=\Omega(k)+1$};
    \path [line] (decide.east) -| node [xshift=-4mm, yshift=-10.5mm] {no} (H2);
   \node [block, below of=H1, white, minimum height=0cm, inner sep=0mm, node distance=1cm] (blank) {};
\end{tikzpicture}
  \caption{Computation scheme.}
  \label{scheme}\ 
\end{figure}

We already know a wide range of values for the function $h(k)$ \cite{Hagedorn_2009, Ziller_Morack_2016}, which \lb represents the Jacobsthal function applied to primorial numbers. From the \lb computation of it, we also know values of $\omega(k)$ \cite{Hagedorn_2009, Ziller_Morack_2016} in addition. This function \lb is equivalent to $\Omega(k)$ for the first $k-1$ odd primes $\pi_i\le p_k$.

We note
\begin{equation*} \begin{split}
\omega(k-1)+1&<p_{k+1} \quad\quad\text{for } 3\le k\le19,\ \text{but}\\
76=\omega(19)+1&>p_{21}=73.
\end{split} \end{equation*}

Therefore, we have $q_k=p_k$ and the computation was always done for $3\le k\le19$. Furthermore, we get $q_{20}=p_{21}=73$ and conclude
$$\Omega(20)=\omega(21)=94\le 151=2\mdot\omega(19)+1=2\mdot\Omega(19)+1.$$

The remaining problem is the computation of $\Omega(k)$ for $k\ge21$. For the moment, \lb we can state that the conjecture of Jacobsthal holds at least for all $k\le19$. Together with the above mentioned results for $k\le2$, we obtain the following table \ref{tab19}.\\\\

\begin{table}[!h]
  \centering
\begin{tabular}{cc}	
\begin{tabular}{!\vl cc|cc !\vl}
  \noalign{\hrule height 2pt} 
%  \rule{0pt}{14pt}$k$&$q_k=p_k$&$\Omega(k)=\omega(k)$&$H(k)=h(k)$\\[2pt]
  \rule{0pt}{14pt}$k$&$\!q_k\!=\!p_k\!$&$\!\Omega(k)\!=\!\omega(k)\!$&$\!H(k)\!=\!h(k)\!$\\[2pt]
  \noalign{\hrule height 2pt} 
  \rule{0pt}{14pt}\ \ 2&\,\ 3&\ \ 1&\ \ 4\\
\ \ 3&\,\ 5&\ \ 2&\ \ 6\\
\ \ 4&\,\ 7&\ \ 4&10\\
\ \ 5&11&\ \ 6&14\\
\ \ 6&13&10&22\\
\ \ 7&17&12&26\\
\ \ 8&19&16&34\\
\ \ 9&23&19&40\\
10&29&22&46\\
  [2pt]
  \noalign{\hrule height 2pt}
\end{tabular}
&
\begin{tabular}{!\vl cc|cc !\vl}
  \noalign{\hrule height 2pt} 
%  \rule{0pt}{14pt}$k$&$q_k=p_k$&$\Omega(k)=\omega(k)$&$H(k)=h(k)$\\[2pt]
  \rule{0pt}{14pt}$k$&$\!q_k\!=\!p_k\!$&$\!\Omega(k)\!=\!\omega(k)\!$&$\!H(k)\!=\!h(k)\!$\\[2pt]
  \noalign{\hrule height 2pt} 
  \rule{0pt}{14pt}11&31&28&\ \ 58\\
12&37&32&\ \ 66\\
13&41&36&\ \ 74\\
14&43&44&\ \ 90\\
15&47&49&100\\
16&53&52&106\\
17&59&58&118\\
18&61&65&132\\
19&67&75&152\\
  [2pt]
  \noalign{\hrule height 2pt}
\end{tabular}
\end{tabular}
 \caption{Established results.}
 \label{tab19}
\end{table}

%%%%%%%%%%%%%%%%%%%%%%%%%%%%%%%%%%%%%%%%%%%%%%%%%%%%%%%
\subsection{An Algorithm for the computation of $\Omega(k)$}
%%%%%%%%%%%%%%%%%%%%%%%%%%%%%%%%%%%%%%%%%%%%%%%%%%%%%%%

The Greedy Permutation Algorithm (GPA) \cite{Ziller_Morack_2016} was developed for the computation of $\omega(k)$ which is equivalent to $\Omega(k)$ for the first $k-1$ odd primes $\pi_i\le p_k$. We extend this algorithm according to our current problem and thus make an efficient computation of $\Omega(k)$ possible. This approach also enables an exhaustive search for all corresponding balanced coverings.\\

The underlying idea of GPA is based on a specific order of choosing appropriate residue classes for each prime under consideration. This principle can contribute to the calculation of $\Omega(k)$ because any maximum covering can uniquely be represented by a set $\{a_i\}$ of residue classes to the corresponding modules $\pi_i$.

\pagebreak

\begin{prop} \label{residue_classes}\ \\
Let $\{\pi_i\}_{i=2}^k$ be a covering of $\langle a\rangle_{\Omega(k)}$ with $a\in\mZ$, $k\in\mN\ge3$, and $\pi_i\in\mP>2$,\lb $\ i=2,\dots,k$. Then there exist uniquely determined non-zero residue classes $a_i\modu\pi_i$ \lb such that every number of the sequence  $\langle 0\rangle_{\Omega(k)}$. belongs to one of them.
\begin{equation*} \begin{split}
&\exists\ a_i\in\{1,\dots,\pi_i-1\},\ i=2,\dots,k\\
&\forall\ x\in\{1,\dots,\Omega(k)\}\ \exists\ i\in\{2,\dots,k\}:x\equiv a_i\ (\modu\pi_i).
\end{split} \end{equation*}
\end{prop}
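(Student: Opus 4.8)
The plan is to translate a covering of an arbitrary window $\langle a\rangle_{\Omega(k)}$ into a covering of the normalised window $\langle 0\rangle_{\Omega(k)}$ by a shift, and then to extract the residue classes. First I would set, for each $i\in\{2,\dots,k\}$, $a_i \equiv -a \pmod{\pi_i}$, i.e. the residue of $-a$ in the range $\{0,1,\dots,\pi_i-1\}$. Then for any $x$ we have $\pi_i\mdiv(a+x) \iff x\equiv -a \equiv a_i\pmod{\pi_i}$, so the covering hypothesis $\forall x\in\{1,\dots,\Omega(k)\}\ \exists i:\pi_i\mdiv(a+x)$ immediately becomes $\forall x\in\{1,\dots,\Omega(k)\}\ \exists i:x\equiv a_i\pmod{\pi_i}$. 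This gives existence of residue classes with the stated covering property; the only remaining points are that each $a_i$ is non-zero and that the $a_i$ are uniquely determined.

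For non-vanishing: I would argue that if some $a_i=0$ (equivalently $\pi_i\mdiv a$), then $\pi_i$ covers none of the positions $1,\dots,\pi_i-1$ from its own residue class except possibly via multiples of $\pi_i$ inside the window — but more cleanly, I would use maximality of $\Omega(k)$. If $a_i=0$ then $x\equiv a_i\pmod{\pi_i}$ for $x\in\{1,\dots,\Omega(k)\}$ only when $\pi_i\mdiv x$; so the positions covered by $\pi_i$ in $\langle 0\rangle_{\Omega(k)}$ are exactly the multiples of $\pi_i$. Replacing $a_i=0$ by $a_i=\pi_i-1$ (say) shifts $\pi_i$'s residue class and, after re-centering, one sees the same multiset of ``gaps'' can be realised with a window of length $\Omega(k)+1$, contradicting maximality — in other words $0$ is never a forced residue, and any covering of maximal length must already use a non-zero class for every prime. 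I would phrase this via the observation already used in the proof of the balanced-covering proposition: a prime whose covered positions are all ``shiftable'' could be re-seated to cover position $\Omega(k)+1$ as well.

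For uniqueness: given the window $\langle 0\rangle_{\Omega(k)}$ is fixed, each $\pi_i\ge 3$ and each covering of maximal length must have $\pi_i$ covering at least one position $x_0\in\{1,\dots,\Omega(k)\}$ exclusively (again by the maximality argument: otherwise $\pi_i$ is redundant or re-seatable, contradicting that $\Omega(k)$ is attained with exactly these $k-1$ primes). That exclusively-covered position forces $a_i\equiv x_0\pmod{\pi_i}$, and since $a_i$ is pinned down modulo $\pi_i$ this determines $a_i$ uniquely in $\{1,\dots,\pi_i-1\}$. Hence the tuple $(a_i)_{i=2}^k$ is unique.

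The main obstacle I expect is the uniqueness/non-vanishing part rather than the existence part: one has to rule out that a window of length $\Omega(k)$ could be covered by the $k-1$ given primes with one of them acting trivially (zero residue class) or redundantly, and the clean way to do that is to reuse the ``otherwise it could cover position $\Omega(k)+1$'' maximality argument from Proposition~\ref{bounded _primes}'s proof, applied to each prime in turn. Everything else is a routine application of the definition of divisibility together with the Chinese remainder theorem to know the $a_i$ exist simultaneously (which here is automatic, since they are defined independently from $-a\bmod\pi_i$).
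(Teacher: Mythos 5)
Your existence step coincides with the paper's: set $a_i\equiv -a\ (\modu \pi_i)$, and the covering condition $\pi_i\mdiv(a+x)$ translates verbatim into $x\equiv a_i\ (\modu \pi_i)$. The uniqueness is also essentially immediate (given the covering, $a_i$ is just the residue of $-a$ in $\{0,\dots,\pi_i-1\}$, so it is pinned down; your detour through exclusively covered positions is unnecessary but not wrong). The problem is your non-vanishing argument, which is the one place where something actually has to be proved, and the argument you give does not work.

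You propose to handle $a_i=0$ by replacing it with $a_i=\pi_i-1$ and ``re-centering'', or alternatively by re-seating $\pi_i$ as in the balanced-covering proposition. Neither move is available. If $a_i=0$, the positions covered by $\pi_i$ are exactly the multiples of $\pi_i$ in $\{1,\dots,\Omega(k)\}$, and nothing forces those positions to be covered by any other $\pi_j$; changing $\pi_i$'s residue class would then leave them uncovered, so the modified tuple need not cover the window at all, and the re-seating argument only applies to a prime none of whose positions is exclusive. The correct observation is much shorter and is what the paper's one-line remark ``$\pi_i\ndiv a$ because $\Omega(k)$ is maximum'' encodes: $a_i=0$ means precisely $\pi_i\mdiv a$, so the element $a$ itself is already covered by $\pi_i$, hence $\{\pi_i\}_{i=2}^k$ covers the longer window $\langle a-1\rangle_{\Omega(k)+1}=(a,a+1,\dots,a+\Omega(k))$, contradicting the maximality of $\Omega(k)$. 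You should replace your replacement/re-seating paragraph with this extension-to-the-left argument. (Note also that the paper proves a converse direction via the Chinese remainder theorem, recovering a covering $\langle a\rangle_{\Omega(k)}$ from any admissible tuple $(a_i)$; you mention CRT only in passing, but for the statement as literally posed the forward direction is what is required.)
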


\begin{proof}
\ \\
$(\Rightarrow)$: \mbox{$a_i\equiv-a\ (\modu p_i)$}, \mbox{$i=2,\dots,k$} satisfy the respective congruences. $\pi_i\ndiv a$ because $\Omega(k)$ is maximum.\\
$(\Leftarrow)$: According to the Chinese remainder theorem, there exists an $a\in\mZ$ solving \lb the system of simultaneous congruences $a\equiv-a_i\ (\modu \pi_i),\ i=2,\dots,k$. With this solution, $\{\pi_i\}_{i=2}^k$ is a covering of $\langle a\rangle_{\Omega(k)}$ because for all respective $x$ there exists $i$ \lb with $x+a\equiv x-a_i\equiv 0\ (\modu \pi_i)$.
\end{proof}

GPA is a recursive algorithm. It starts with a given set of primes and an empty array representing the sequence $\langle a\rangle_m$ of a tentative length $m$. The algorithm tries to find suitable residuals $a_i\modu\pi_i$ such that as many positions of the sequence as possible can be covered. In each step, one of the remaining $a_i\modu\pi_i$ and therefore also the prime $\pi_i$ itself are chosen, the corresponding array elements are assigned, and the number of still free positions is compared with the maximum number of positions which can be covered by the remaining primes. If there is no more chance to fill the sequence then the current step will be skipped. If necessary, the length $m$ of the sequence will be enlarged until the maximum possible value is reached.

The number of conceivable combinations of residuals is enormous. The algorithm processes only a fraction of it. GPA chooses that residual first which covers most \lb of the free positions. It need not be reconsidered on the same recursion level again \lb because all possible combinations are checked with the first use. So, the maximum length of the sequence can be reached more quickly, and the rejection of bad \lb combinations can be done earlier. A detailed description of the Greedy Permutation Algorithm can be found in \cite{Ziller_Morack_2016}.\\

Some simple generalisations of GPA make the efficient computation of $\Omega(k)$ \lb possible. The set of primes which should be processed by the algorithm must be \lb extended to $p_2,\dots,p_k,\dots,q_k$. Moreover, the number of recursion steps must be \lb limited to $k-1$. This limit also has to be considered when counting the maximum number of coverable positions. And finally, every potential solution must be verified to be balanced.

The following pseudocode \ref{EGPA} summarises the algorithm applied in this paper.

\pagebreak

\begin{algorithm}[!h]
\caption{\ Extended Greedy Permutation Algorithm (EGPA).} \label{EGPA}
\begin{algorithmic} 
\Procedure{extended\_greedy\_permutation}{arr,i,ftab}
   \If {i<k-1}
      \If {i=1}
           \State fill\_frequency\_table\_of\_remainders(ftab)
           \State n\_empty=m	\Comment Starting number of free array positions
           \State n\_possible=count\_max\_possible\_covered\_positions(ftab)\\
			 				\Comment Starting number of maximum coverable positions
       \Else
         \State n\_empty=update\_free\_array\_positions(arr)
         \State n\_possible=update\_max\_possible\_covered\_positions(ftab)
      \EndIf

       \If {n\_possible$\ge$n\_empty}
          \State select\_appropriate\_$a_i$\_and\_$\pi_i$(ftab)
          \State arr1=arr; \ fill\_array(arr1,$a_i$,$\pi_i$)
          \State  ftab1=ftab; \ update\_frequency\_table\_of\_remainders(ftab1)
          \State extended\_greedy\_permutation(arr1,i+1,ftab1)	\Comment Permutation level i+1

          \State delete\_frequency\_of\_$a_i$\_mod\_$\pi_i$(ftab)
           \State extended\_greedy\_permutation(arr,i,ftab)	\Comment Permutation level i
      \EndIf
   \Else\ count\_array(arr)
      \If {longer\_sequence\_found}
         \State increase\_m
         \State update\_data\_structures
      \Else
         \If {sequence\_is\_covered and covering\_is\_balanced}
             \State record\_covering
         \EndIf
      \EndIf
   \EndIf
\EndProcedure

\vspace{1.75mm} \hrule \vspace{1.75mm}

\State m=starting\_sequence\_length	\Comment Starting sequence length
\State arr=empty\_array		\Comment Sequence array
\State plist=[$p_2,\dots,p_k,\dots,q_k$]	\Comment Array of primes
\State i=1	\Comment Starting prime array index
\State ftab=empty\_table	\Comment Frequency table of remainders
\State extended\_greedy\_permutation(arr,i,ftab)	\Comment Recursion
\end{algorithmic}
\end{algorithm}

\pagebreak

%%%%%%%%%%%%%%%%%%%%%%%%%%%%%%%%%%%%%%%%%%%%%%%%%%%%%%%
%%%%%%%%%%%%%%%%%%%%%%%%%%%%%%%%%%%%%%%%%%%%%%%%%%%%%%%
\section{Results}
%%%%%%%%%%%%%%%%%%%%%%%%%%%%%%%%%%%%%%%%%%%%%%%%%%%%%%%
%%%%%%%%%%%%%%%%%%%%%%%%%%%%%%%%%%%%%%%%%%%%%%%%%%%%%%%

\ \\
The Extended Greedy Permutation Algorithm \ref{EGPA} was successfully applied to the \lb computation of $\Omega(k)$ for $20\le k\le43$. The outcome was used to calculate the \lb corresponding $H(k)$-values as depicted in figure \ref{scheme}. $\Omega(44)<320$ was verified for the assessment of $H(43)$ as well. As an additional result, we received all existing balanced coverings of length $\Omega(k)$ in that range and provide this data in ancillary files.\\

%%%%%%%%%%%%%%%%%%%%%%%%%%%%%%%%%%%%%%%%%%%%%%%%%%%%%%%
\subsection{Summarised results}
%%%%%%%%%%%%%%%%%%%%%%%%%%%%%%%%%%%%%%%%%%%%%%%%%%%%%%%

In accord with Hajdu and Saradha \cite{Hajdu_Saradha_2012}, we can confirm Jacobsthal's conjecture \ref{Jacobsthal_conjecture} $H(k)=h(k)$ for $20\le k\le23$ whereas $H(24)>h(24)$. Thus, we ascertained $k=24$ to be the smallest counterexample. In our further calculations we found that $H(k)=h(k)$ holds for $k= 25,26,28,29,31,32$ only. In all other cases, Jacobsthal's conjecture was violated. Moreover, the difference $H(k)-h(k)$ seems to grow on average.\\

Table \ref{tab_new} summarises the findings including\vspace{-2mm}
\begin{itemize}
\setlength{\itemsep}{-1mm}
\item the index $k$ for $k=20,\dots,43$,
\item the $k^{th}$ prime $p_k$,
\item the maximum processed prime $q_k$ according to proposition \ref{bounded _primes},
\item the primorial Jacobsthal function $h(k)$ according to definition \ref{primorial_Jacobsthal},
\item the maximum Jacobsthal function $H(k)$ according to definition \ref{max_Jacobsthal},
\item the function $\Omega(k)$ according to definition \ref{Omega}, and
\item the number $n_{cov}$ of balanced coverings containing at least one prime $>p_k$.
\end{itemize}\vspace{-2mm}
Violations of Jacobsthal's conjecture are highlighted.\\

We emphasise that $n_{cov}$ counts only balanced coverings of the sequences of length $\Omega(k)$ containing at least one prime $\pi_i>p_k$.

In case of $H(k)=h(k)$, there are always coverings with $\pi_i\le p_k$ for $i=2,\dots,k$ which were completely provided in \cite{Ziller_Morack_2016}. Additional coverings counted by $n_{cov}$ could be found for $k=21,22,26,29$ only, and none for  $k=20,23,25,28,31,32$.

In case of $H(k)>h(k)$, other balanced coverings cannot exist. At least one prime $\pi_i>p_k$ must be included by definition.

\pagebreak

\begin{table}[!h]
  \centering
  \setlength{\tabcolsep}{4mm} 
\begin{tabular}{!\vl r|rr|rr|rr !\vl}
  \noalign{\hrule height 2pt} 
\rule{0pt}{14pt}$k$&$p_k$&$q_k$&h(k)&H(k)&$\Omega(k)$&$n_{cov}$\\[2pt]
  \noalign{\hrule height 2pt} 
\rule{0pt}{14pt}20&71&73&174&174&86&0\\
21&73&83&190&190&94&48\\
22&79&89&200&200&99&180\\
23&83&97&216&216&107&0\\
24&89&107&\clg234&\clg236&117&12\\
25&97&113&258&258&128&0\\
26&101&127&264&264&131&320\\
27&103&131&\clg282&\clg284&141&216\\
28&107&139&300&300&149&0\\
29&109&149&312&312&155&2074\\
30&113&151&\clg330&\clg332&165&48\\
31&127&163&354&354&176&0\\
32&131&173&378&378&188&0\\
33&137&181&\clg388&\clg390&194&14\\
34&139&193&\clg414&\clg420&209&4\\
35&149&199&\clg432&\clg438&218&8\\
36&151&211&\clg450&\clg462&230&2\\
37&157&229&\clg476&\clg482&240&4\\
38&163&241&\clg492&\clg500&249&2\\
39&167&241&\clg510&\clg520&259&116\\
40&173&257&\clg538&\clg544&271&4\\
41&179&271&\clg550&\clg566&282&4\\
42&181&283&\clg574&\clg588&293&4\\
43&191&293&\clg600&\clg610&304&2\\
 [2pt]
  \noalign{\hrule height 2pt}
\end{tabular}
 \caption{Computational results.}
 \label{tab_new}
\end{table}

%%%%%%%%%%%%%%%%%%%%%%%%%%%%%%%%%%%%%%%%%%%%%%%%%%%%%%%
\subsection{Ancillary data}
%%%%%%%%%%%%%%%%%%%%%%%%%%%%%%%%%%%%%%%%%%%%%%%%%%%%%%%

In addition to this paper, we provide three ancillary files each of which presents the exhaustive results of our
calculations in a specific format. These files include all $n_{cov}$ balanced coverings of the respective sequences of length $\Omega(k)$ containing at least one prime larger than $p_k$.

In the cases in which Jacobthal's conjecture \ref{Jacobsthal_conjecture} applies, there exist further \lb relevant coverings which however contain only primes not exceeding $p_k$. For detailed information about these coverings we refer to \cite{Ziller_Morack_2016}.\\

A sequence completely covered by a set of primes can uniquely be represented \lb in three ways. Detailed explanatory notes on this topic can be found in \cite{Ziller_Morack_2016}. For \lb each of these representations, we provide an analogous separated file to facilitate \lb comparisons with previous results.\\

\textbf{moduli\_c.txt}\\

In this file, every position $x\in\{1,\dots,m\}$ of a sequence is characterised by the \lb smallest prime $p_j$,  $j\in\{2,\dots,k\}$ covering $x$.\\

\textbf{remainders\_c.txt}\\

This file includes the ordered set of remainders $a_i\modu\pi_i$, $i\in\{2,\dots,k\}$ for every sequence as described in proposition \ref{residue_classes}.\\

\textbf{permutations\_c.txt}\\

The third file represents every covering by a uniquely determined permutation \lb of the primes under consideration. Starting with a prime which covers position 1, the permutation order follows from the coverage of the next still uncovered position at each time until the sequence is completely covered.\\

The sequences in \dq remainders\_c.txt\dq\ are separately sorted  for each $k$ by ascending\lb remainders. This order was maintained in the other files \dq permutations\_c.txt\dq\ and\lb \dq moduli\_c.txt\dq\ to make a direct comparison possible.

%%%%%%%%%%%%%%%%%%%%%%%%%%%%%%%%%%%%%%%%%%%%%%%%%%%%%%%
\subsection{Final remarks}
%%%%%%%%%%%%%%%%%%%%%%%%%%%%%%%%%%%%%%%%%%%%%%%%%%%%%%%

The conjecture \ref{Jacobsthal_conjecture} of Jacobsthal has previously been disproved. This paper provides computational results extending far beyond. New questions arise with it. We dare make the following three conjectures and put them up for discussion.

\begin{conj}
$$H(k)>h(k) \quad\text{for all}\ k\ge33.$$
\end{conj}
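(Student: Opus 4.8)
We sketch a possible line of attack; since the statement is asymptotic it will not follow from any finite computation alone.

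\emph{Reduction to a statement about $\Omega$ and $\omega$.} Because $p_k\#$ is even, Lemma \ref{two} applied to the prime set $\{2,p_2,\dots,p_k\}$ shows that the longest block of consecutive integers covered by $\{2,3,\dots,p_k\}$ has length $2\omega(k)+1$, so that $h(k)=j(p_k\#)=2\omega(k)+2$. On the other hand $H(k)=\max\{\Omega(k+1)+1,\,2\mdot\Omega(k)+2\}\ge 2\mdot\Omega(k)+2$. Hence $H(k)>h(k)$ whenever $\Omega(k)>\omega(k)$, and it would suffice to prove
\[
\Omega(k)>\omega(k)\qquad\text{for every }k\ge33 .
\]
In words: for each $k\ge33$ one must exhibit $k-1$ odd primes, not all among the first $k-1$, covering a block of consecutive integers strictly longer than any block covered by $\{p_2,\dots,p_k\}$. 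For $33\le k\le43$ this is exactly the content of the highlighted rows of Table \ref{tab_new}, so those cases are settled by EGPA; the task is to make the phenomenon permanent.

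\emph{A prime-substitution construction.} The natural device, in the spirit of Hajdu and Saradha \cite{Hajdu_Saradha_2012}, is to start from an optimal primorial covering of $\langle 0\rangle_{\omega(k)}$ described by residues $a_i\modu p_i$ as in Proposition \ref{residue_classes}, locate a \emph{dispensable} small prime $p_\ell$ --- one whose positions in $\{1,\dots,\omega(k)+1\}$ can be re-covered after re-selecting the residues of the surviving primes --- delete it, and install a fresh prime $q>p_k$ whose residue, chosen by the Chinese remainder theorem, makes $q$ cover position $\omega(k)+1$ together with one further position that the rebuilt covering still leaves open. This yields a covering of $\langle 0\rangle_{\omega(k)+1}$ by $k-1$ odd primes, hence $\Omega(k)\ge\omega(k)+1$; the proposition on balanced coverings then upgrades it to a balanced covering of the same length, and Proposition \ref{bounded _primes} confirms that $q$ lies in the admissible range $(p_k,q_k]$, so nothing leaves the search space already explored for small $k$.

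\emph{Making the construction uniform --- the main obstacle.} To promote "there exists a dispensable small prime together with a patching prime" from a finite range to all $k\ge33$, I would try an induction carrying, from a certified covering of $\langle 0\rangle_{\Omega(k)}$ by $k-1$ odd primes with $\Omega(k)\ge\omega(k)+1$, an invariant that forces $\Omega(k+1)\ge\omega(k+1)+1$ after adjoining one further prime. The crux, and where I expect the argument to stall, is that $\omega(k+1)-\omega(k)$ can exceed the coverage of any single admissible prime, so a one-prime extension need not preserve the strict gap; one genuinely needs quantitative control of how the structure of near-optimal primorial coverings degrades when a small prime is withheld, plus the guarantee that $(p_k,q_k]$ always contains a prime suited to the required two-position patch. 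That is essentially the problem of understanding extremal Jacobsthal coverings, which is open in the same sense that the true order of $h(k)$ is open. A more realistic interim target is $\Omega(k)>\omega(k)$ for infinitely many $k$ --- for instance by propagating the counterexamples of Table \ref{tab_new} through a self-similar construction, combining two beneficial swaps on coprime prime ranges or iterating Lemma \ref{two} --- which would at least establish that Jacobsthal's conjecture fails infinitely often, short of the cofinite statement conjectured here.
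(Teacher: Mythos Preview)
The statement you are addressing is not a theorem in the paper; it is explicitly listed among three conjectures that the authors ``put them up for discussion'' in the final remarks. The paper offers no proof and does not claim one --- its only evidence is the computational data of Table~\ref{tab_new} for $33\le k\le43$. So there is nothing to compare your attempt against.

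Your write-up is honest about this: you call it a ``possible line of attack'', note that ``it will not follow from any finite computation alone'', and explicitly say where you ``expect the argument to stall''. That is the correct assessment. Your reduction $H(k)>h(k)\Leftarrow\Omega(k)>\omega(k)$ is sound (and is exactly how the paper's data witnesses the inequality in the computed range), and your prime-substitution idea is indeed the Hajdu--Saradha mechanism that produced the first counterexample. But the paragraph headed ``Making the construction uniform'' is not a proof sketch; it is a list of obstructions, and you yourself identify the fatal one: the increment $\omega(k+1)-\omega(k)$ is not controlled well enough to guarantee that a single prime swap keeps pace, and the structure of extremal primorial coverings is not understood. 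Nothing in your proposal closes that gap, nor does the paper.

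In short: the statement is open, the paper treats it as open, and your proposal correctly diagnoses why it is hard without supplying the missing idea. It should be labelled as heuristic discussion, not as a proof.
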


With the detection of the first counterexample, Hajdu and Saradha \cite{Hajdu_Saradha_2012} asked for whether there are more or even infinitely many counterexamples or not. Our results, \lb as shown in table \ref{tab_new}, rather imply questions in the opposite direction: Are there \lb infinitely many $k$ satisfying Jacobsthal's conjecture? Or might $k=32$ be the last one?

\begin{conj}
$$H(k)<2\mdot H(k-1) \quad\text{for all}\ k\ge3.$$
\end{conj}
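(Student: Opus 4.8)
The plan is to first reduce the assertion to an inequality about the auxiliary function $\Omega$ alone, and then to attack that by deleting a single prime from an optimal covering.

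\textbf{Step 1 (reduction to $\Omega$).} Inserting the identity $H(\kappa)=\max\{\Omega(\kappa+1)+1,\,2\Omega(\kappa)+2\}$ from the excerpt (valid for $\kappa\ge2$) for $\kappa=k$ and $\kappa=k-1$, one sees that the conjecture is equivalent to the single family
$$\Omega(k)\le 2\,\Omega(k-1)\qquad\text{for all }k\ge3 .$$
Indeed, granting this, $\Omega(k+1)\le 2\Omega(k)$ forces $H(k)=2\Omega(k)+2$ and $\Omega(k)\le 2\Omega(k-1)$ forces $H(k-1)=2\Omega(k-1)+2$, whence $2H(k-1)=4\Omega(k-1)+4>2\Omega(k)+2=H(k)$; conversely $\Omega(k_0)\ge 2\Omega(k_0-1)+1$ yields $H(k_0)\ge 2H(k_0-1)$. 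This also explains the restriction to $k\ge3$: since $H(1)=2$ and $H(2)=2\Omega(2)+2=4$ one has $H(2)=2H(1)$, the extremal ``$+1$'' case being genuinely attained at the bottom, by the covering $\{2,3\}$ of $\langle 1\rangle_{3}$.

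\textbf{Step 2 (deleting the largest prime; the manageable cases).} To prove $\Omega(k)\le 2\Omega(k-1)$, fix an optimal covering $\{\pi_i\}_{i=2}^{k}$ of $\langle a\rangle_{\Omega(k)}$, without loss of generality with $\pi_2<\dots<\pi_k$, and drop $\pi_k$. If $\pi_k$ covers $t$ positions of the sequence, these split it into at most $t+1$ subruns, each covered by the remaining $k-2$ primes and hence of length $\le\Omega(k-1)$, and every \emph{internal} subrun has length exactly $\pi_k-1$, so $\pi_k\le\Omega(k-1)+1$ whenever $t\ge2$. The case $t=0$ is trivial; the case $t=1$ gives $\Omega(k)\le 2\Omega(k-1)+1$, with equality forcing a rigid configuration — a single $(k-2)$-set of odd primes covering two runs of the maximal length $\Omega(k-1)$ placed symmetrically around an isolated multiple $a+\Omega(k-1)+1$ of $\pi_k$. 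I would exclude this by observing that the adjacent multiples of $\pi_k$ must fall outside the sequence (so $\pi_k\ge\Omega(k-1)+1$), that $\pi_k=\Omega(k-1)+1$ is impossible because then $\pi_k\mid a$ and $\langle a-1\rangle_{\Omega(k)+1}$ would be covered, contradicting maximality, and that the leftover case $\pi_k>\Omega(k-1)+1$ (which forces $q_k=p_k$) should be closable by a closer analysis of the residues the small primes are compelled to take at $a$ and at $a+\Omega(k-1)+1$.

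\textbf{Step 3 (the obstruction: $t\ge2$).} The genuine difficulty is $t\ge2$, where removing $\pi_k$ — or any single prime — shatters the sequence and the crude bound $\Omega(k)\le t+(t+1)\Omega(k-1)$ is worthless. I would split again. If $\pi_k\le p_k$ then $\{\pi_i\}_{i=2}^{k}=\{p_2,\dots,p_k\}$ is forced and the claim collapses to the primorial case $\omega(k)\le 2\omega(k-1)$, equivalently $h(k)<2h(k-1)$ — still open, but anchored by the tabulated data and the rigid combinatorics of primorial coverings. If $\pi_k>p_k$, one may assume the covering balanced, so $\pi_k$ occupies at least two positions exclusively and $\pi_k\le q_k$, hence $\pi_k\le\Omega(k-1)+1$; the natural move is to imitate Proposition \ref{bounded _primes} and replace $\pi_k$ by an omitted small prime $p_\nu<\pi_k$, but a single swap can no longer absorb the two exclusive positions, so one needs either a cleverer shift or an outright bound on $t$. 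Bounding $t$ would, however, require an a priori estimate $\Omega(k)<3\,\Omega(k-1)$ to bootstrap and, more fundamentally, control over how small the \emph{largest} prime of an optimal covering can be: $\pi_k$ may be as small as $p_k$ while $\Omega(k)$ is only known to be polynomially bounded in $p_k$, so $t$ is not presently controllable. I expect exactly this — not the bookkeeping of Step 2 — to be the point where the paper's elementary methods stall, which is presumably why the statement is offered as a conjecture; a proof will likely need a global counting argument over the small primes whose slack, tellingly, first appears at $k=3$.
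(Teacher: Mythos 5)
This statement is one of the three conjectures the paper ``dares to make and put up for discussion'' in its closing subsection: the paper contains no proof of it, only the computational evidence of Tables \ref{tab19} and \ref{tab_new} and a one-line remark on an equivalent formulation. So there is no proof of the paper's to compare yours against, and your own verdict on your argument is the correct one: it is not a proof. For the record, your Step 1 is sound and in fact slightly more careful than the paper's own remark. The paper asserts equivalence with $\Omega(k)\le 2\,\Omega(k-1)+1$ for all $k\ge3$, but in the boundary case $\Omega(k)=2\,\Omega(k-1)+1$ one gets $H(k)=2\Omega(k)+2=4\Omega(k-1)+4=2H(k-1)$, so the strict inequality fails; the correct equivalent is $\Omega(k)\le 2\,\Omega(k-1)$, exactly as you derive. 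Your Step 2 bookkeeping (internal gaps between consecutive multiples of $\pi_k$ have length at least $\pi_k-1$, hence $\pi_k\le\Omega(k-1)+1$ when $t\ge2$; the cases $t\le1$ give $\Omega(k)\le 2\,\Omega(k-1)+1$ with a rigid configuration at equality) is also correct as far as it goes.

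The genuine gap is precisely where you locate it, and it is fatal to the attempt. In the case $t=1$ the subcase $\pi_k>\Omega(k-1)+1$ is left open (``should be closable by a closer analysis'' is not an argument), and in the case $t\ge2$ deleting any single prime shatters the sequence into up to $t+1$ runs, so the only available bound is $\Omega(k)\le t+(t+1)\,\Omega(k-1)$ with no control on $t$. Nothing in the paper supplies that control: balancing and the bound $\pi_i\le q_k$ of proposition \ref{bounded _primes} constrain the large primes but not the multiplicity with which a small prime such as $p_2=3$ covers the sequence, and the primorial subcase of your reduction is exactly the open inequality $h(k)<2h(k-1)$. Since you flag all of this explicitly and claim no theorem, the write-up is honest; but it establishes nothing beyond what the paper already records, and the statement remains a conjecture supported only by the data for $k\le 43$.
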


The specified assertion is equivalent to $\Omega(k)\le2\mdot\Omega(k-1)+1$ for all $k\ge3$. \lb Its validity would simplify the calculation of $H(k)$ as outlined in subsection \ref{odd}. The assumption was already made in \cite{Hajdu_Saradha_2012}. All data known so far support the conjecture.

A simple upper bound could be derived as a corollary of the conjecture: $H(k)<2^k$ for $k\ge3$. This bound is in fact not tight. However, the weaker version $H(k)\le2^k$ has always been proven by Kanold \cite{Kanold_1967} by other means.

\begin{conj}
$$H(k)<k^2 \quad\text{for all}\ k\ge3.$$
\end{conj}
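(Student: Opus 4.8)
The statement $H(k)<k^2$ is an upper bound on $H(k)$ far stronger than Kanold's $H(k)\le 2^k$, and in fact just beyond the reach of current analytic technique; the natural plan is therefore to combine the structural reductions of Section~2 with a sieve estimate for the Jacobsthal function. First I would record that, by $H(k)=\max\{\Omega(k+1)+1,\ 2\mdot\Omega(k)+2\}$, it suffices to establish a bound of the shape $\Omega(k)\le\tfrac12 k^2-2$ for all $k$ from some point on, the finitely many small cases and the (weaker) requirement coming from the $\Omega(k+1)+1$ term being read off directly from Table~\ref{tab_new}. So the whole problem reduces to bounding the maximal length $\Omega(k)$ of a balanced covering.

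Next I would feed the structural information back in. Any balanced covering of $\langle a\rangle_{\Omega(k)}$ consists of $k-1$ distinct odd primes $\pi_i\le q_k$ with $q_k\le\max\{p_k,\ \Omega(k-1)+1\}$ by Proposition~\ref{bounded _primes}. Arguing by induction on $k$, the hypothesis $\Omega(k-1)<\tfrac12(k-1)^2$ together with $p_k=O(k\log k)$ gives $q_k=O(k^2)$, so the modulus $n=\prod_{i=2}^k\pi_i$ attached to the covering satisfies $\log n\le(k-1)\log q_k=O(k\log k)$. Since $\Omega(k)=j(n)-1$ for this particular $n$, an Iwaniec-type estimate $j(n)\ll(\log n)^2$ would yield $\Omega(k)\ll(k\log k)^2$. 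This is the crux of the matter: it overshoots the target $\tfrac12 k^2$ by a factor $(\log k)^2$.

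\textbf{The main obstacle.} Closing that logarithmic gap is precisely where the difficulty lies. One route is to replace the general bound $j(n)\ll(\log n)^2$ by something sharper for the restricted family at hand — $n$ squarefree, with exactly $k-1$ prime factors, all of size $O(k^2)$ — but the exponent $2$ in Iwaniec's theorem has resisted improvement even in such favourable special cases, and a gain here seems to demand new sieve input rather than bookkeeping. A second, purely combinatorial route attacks the covering directly, but a first‑Bonferroni count only gives $\Omega(k)\bigl(1-\sum_{i=2}^k 1/\pi_i\bigr)\le k-1$, which is vacuous once $\sum 1/\pi_i\ge1$ (around $k=10$ for the smallest primes), and passing to higher‑order inclusion–exclusion introduces error terms of size $2^k$. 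A third, partial idea is bootstrapping — use a crude a priori bound $\Omega(k)\ll k^2\log^2 k$ to control $q_k$ and reinsert — but this leaves the exponent of the logarithm untouched and so cannot by itself reach $k^2$. In short, the plan cleanly reduces the conjecture to a quantitatively slightly‑better‑than‑Iwaniec bound for Jacobsthal's function on an advantageous subfamily of moduli; supplying that bound is the genuinely open part, which is why the statement is offered here as a conjecture rather than a theorem.
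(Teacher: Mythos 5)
This statement is a conjecture in the paper, not a theorem: the author gives no proof, only computational verification for $k\le 43$ together with the remark that Kanold established $H(k)\le k^2$ for $2\le k\le 12$ and that the bound would imply elementary proofs of Dirichlet's and Linnik's theorems. So there is no paper proof to compare against, and your proposal --- correctly --- does not claim to supply one. What you do supply is a sound conditional reduction, entirely consistent with the paper's machinery: the identity $H(k)=\max\{\Omega(k+1)+1,\ 2\Omega(k)+2\}$ does reduce the claim to a bound of the form $\Omega(k)\le\tfrac12k^2-2$; the proposition on balanced coverings does force all primes involved below $q_k=O(k^2)$ under the inductive hypothesis, hence $\log n=O(k\log k)$ for the associated modulus; and Iwaniec's $j(n)\ll(\log n)^2$ then gives $\Omega(k)\ll k^2(\log k)^2$, which misses the target by precisely the $(\log k)^2$ factor you identify. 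Your diagnosis of the alternative routes is also accurate --- first-order Bonferroni is vacuous once $\sum 1/\pi_i\ge 1$, and bootstrapping the bound on $q_k$ cannot touch the logarithms.

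The genuine gap is therefore the one you name yourself: removing the logarithmic factors from Iwaniec's estimate, even for the favourable subfamily of squarefree $n$ with $k-1$ prime factors all of size $O(k^2)$, is an open problem, and nothing in the paper or in your argument resolves it. Indeed the restriction to this subfamily buys essentially nothing, since the extremal $n$ for the Jacobsthal function are already of this shape. Two smaller points worth flagging if you write this up: the induction as stated needs a base case and needs the inductive bound $\Omega(k-1)<\tfrac12(k-1)^2$ only to control $q_k$, for which the much weaker $\Omega(k-1)\ll k^2(\log k)^2$ already suffices, so the ``bootstrapping'' remark can be folded into the main argument rather than listed as a separate route; and the implied constant in Iwaniec's theorem is not small, so even the $k^2(\log k)^2$ bound does not recover the conjecture for any explicit range of $k$. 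The upshot is that you have reformulated the conjecture as a quantitative strengthening of a known sieve bound, which is a useful observation but should be presented as a reduction, not a proof; given that the paper itself only offers the statement as a conjecture supported by data, that is the honest and appropriate conclusion.
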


This would be a fairly strong upper bound. Kanold proved $H(k)\le k^2$ for the cases $2\le k\le12$ \cite{Kanold_1967} without knowing the latest function values.

An elementary proof of the assumption could directly be continued to an \lb elementary proof of Dirichlet's theorem \cite{Mercer_2017}. Furthermore, the theorem of Linnik would also be a conclusion from this conjecture according to \cite{Kanold_1965}.

All computed data satisfy the assumption.

%\ \\
\subsubsection*{Contact} %%%%%%%%%%%%%%%%%%%%%%%%%%%%%%%%%%%%%%%%
marioziller@arcor.de
%\pagebreak

%%%%%%%%%%%%%%%%%%%%%%%%%%%%%%%%%%%%%%%%%%%%%%%%%%%%%%%
%%%%%%%%%%%%%%%%%%%%%%%%%%%%%%%%%%%%%%%%%%%%%%%%%%%%%%%
%\bibliographystyle{amsplain}
\bibliography{References}     

\begin{thebibliography}{10}

\bibitem{Erdoes_1962}
Paul Erd\"os, \emph{On the {Integers} {Relatively} {Prime} to $n$ and a
  {Number}-{Theoretic} {Function} {Considered} by {Jacobsthal}}, MATHEMATICA
  SCANDINAVICA \textbf{10} (1962), 163--170.

\bibitem{Hagedorn_2009}
Thomas~R. Hagedorn, \emph{Computation of {Jacobsthal}'s {Function} h(n) for
  n\;<\;50}, Mathematics of Computation \textbf{78} (2009), no.~266,
  1073--1087.

\bibitem{Hajdu_Saradha_2012}
L.~Hajdu and N.~Saradha, \emph{Disproof of a conjecture of Jacobsthal},
  Mathematics of Computation \textbf{81} (2012), no.~280, 2461--2471.

\bibitem{Jacobsthal_1960_I}
Ernst Jacobsthal, \emph{{\"Uber Sequenzen ganzer Zahlen, von denen keine zu n
  teilerfremd ist. I}}, D.K.N.V.S. Forhandlinger \textbf{33} (1960), no.~24,
  117--124.

\bibitem{Jacobsthal_1960_II}
Ernst Jacobsthal, \emph{{\"Uber Sequenzen ganzer Zahlen, von denen keine zu n
  teilerfremd ist. II}}, D.K.N.V.S. Forhandlinger \textbf{33} (1960), no.~25,
  125--131.

\bibitem{Jacobsthal_1960_III}
Ernst Jacobsthal, \emph{{\"Uber Sequenzen ganzer Zahlen, von denen keine zu n
  teilerfremd ist. III}}, D.K.N.V.S. Forhandlinger \textbf{33} (1960), no.~26,
  132--139.

\bibitem{Jacobsthal_1961_IV}
Ernst Jacobsthal, \emph{{\"Uber Sequenzen ganzer Zahlen, von denen keine zu n
  teilerfremd ist. IV}}, D.K.N.V.S. Forhandlinger \textbf{34} (1961), no.~1,
  1--7.

\bibitem{Jacobsthal_1961_V}
Ernst Jacobsthal, \emph{{\"Uber Sequenzen ganzer Zahlen, von denen keine zu n
  teilerfremd ist. V}}, D.K.N.V.S. Forhandlinger \textbf{34} (1961), no.~24,
  110--115.

\bibitem{Kanold_1965}
Hans-Joachim Kanold, \emph{{\"U}ber Primzahlen in arithmetischen Folgen. II}, Mathematische Annalen \textbf{157} (1965), no.~5, 358--362.

\bibitem{Kanold_1967}
Hans-Joachim Kanold, \emph{{\"U}ber eine zahlentheoretische Funktion von Jacobsthal}, Mathematische Annalen \textbf{170} (1967), no.~4, 314--326.

\bibitem{Mercer_2017}
Idris Mercer, \emph{Dirichlet's theorem and Jacobsthal's function}, arXiv:1708.05415 [math.NT] (2017).

\bibitem{Ziller_Morack_2016}
Mario Ziller and John~F. Morack, \emph{Algorithmic concepts for the computation of {Jacobsthal}'s function}, arXiv:1611.03310 [math.NT] (2016).

\end{thebibliography}
%%%%%%%%%%%%%%%%%%%%%%%%%%%%%%%%%%%%%%%%%%%%%%%%%%%%%%%
%%%%%%%%%%%%%%%%%%%%%%%%%%%%%%%%%%%%%%%%%%%%%%%%%%%%%%%

%%%%%%%%%%%%%%%%%%%%%%%%%%%%%%%%%%%%%%%%%%%%%%%%%%%%%%%
\end{document}